\DeclareMathOperator{\arctanh}{arctanh}
\newtheorem{theorem}{Theorem}
\newtheorem{example}[theorem]{Example}
\newtheorem{lemma}[theorem]{Lemma}
\newtheorem{remark}[theorem]{Remark}
\newcommand{\R}{\mathbb{R}}
\newcommand{\U}{\mathbb{H}}
\newcommand{\INT}{\int_{-\infty}^{\infty}}
\begin{document}

\title{Krein condition and the Hilbert transform}
\author{Marcos L\'{o}pez-Garc\'{i}a}
\subjclass[2010]{44A60, 62E10, 44A15}

 \keywords{Krein condition, Hilbert transform, Stieltjes class}
\email{marcos.lopez@im.unam.mx}
\address{
Instituto de Matem\'{a}ticas-Unidad Cuernavaca \\
   Universidad Nacional Aut\'{o}noma de M\'{e}xico\\
   Apdo. Postal 273-3, Cuernavaca Mor. CP 62251, M\'exico}

\maketitle

\begin{abstract}
Krein condition have been used as a qualitative result to show the M-indeterminacy of some kind of densities. In this work we use results from the theory of the Hilbert transform to construct families of densities having all the same finite moment sequence as a density $f$ with finite logarithmic integral. Actually, our approach explicitly gives Stieltjes classes with center at $f$ and perturbations involving the Hilbert transform of $\ln f.$ We consider densities supported on the whole real line or the positive half line.
\end{abstract}

\section{Introduction}
Let $F$ be a distribution supported on $I=\mathbb{R}^{+}$ or $\mathbb{R}$ such that 
$$\int_I x^n dF(x) < \infty \quad \text{for all } n\geq 1.$$
Under this assumption we say that $F$ has a finite moment sequence on $I$. A distribution $F$ with finite moment sequence on $I$ is called $M$-indeterminate if there are other distributions supported on $I$ having the same moments as $F$. \\

In 1945 Krein proved that if $F$ is an absolutely continuous distribution on $\R$ with finite moment sequence whose density $f$ has finite logarithmic integral, i.e. 
\begin{equation}\label{logint}
\int_{\mathbb{R}}-\frac{\log f\left( x\right) }{1+x^{2}}dx<\infty ,
\end{equation}
then $F$ is $M$-indeterminate. This is the so-called Krein criterion.\\

About the Krein criterion, in \cite{Ostro} the authors say that it ``is a qualitative result; there is no indication of how to write other distributions with the same
moments as $F".$ In \cite[Theorem 1]{Lin} the author used the theory of the Hardy space on the upper half plane $H^1$ to get a simple proof of the Krein criterion. In fact, if $f$ is a density satisfying the Krein condition (\ref{logint}), the author proved the existence of a density $g$ having the same moment sequence as $f$. In this work we go a step further, we combine the ideas in the proof of Theorem 1 in \cite{Lin} with some results of the Hilbert transform and the space $H^1$, to obtain an explicit description of the latter density $g$.\\ 

Actually, in this setting, we get a family of densities having the same moment sequence as $f$. To do this, we consider a construction introduced in \cite{Stoyanov1} to exhibit some
densities with the same moment sequence. \\

Let $f$ be a density with finite moment sequence on $I$. Assume that there exists a bounded measurable function $h$ with $\sup_{x\in I}\left|
h\left( x\right) \right| \leq 1,$ such that 
$$\int_{I}x^{n}f\left( x\right) h\left( x\right) dx=0 \quad\text{ for all }n\geq 0$$
and the function $fh$ is not identically zero, then the Stieltjes class $S_{I}\left( f,h\right) $ with center at $f $ and perturbation $h$ is given by
\begin{equation*}
S_{I}\left( f,h\right) =\left\{ f\left(x\right) \left[ 1+\varepsilon h\left( x\right) \right] :x\in I,\text{ }\varepsilon \in \left[ -1,1\right] \right\} .
\end{equation*}
Clearly, $S_{I}\left( f,h\right) $ is an infinite familiy of densities all having the same moment sequence as $f$. \\

Thus, our main results can be written in terms of Stieltjes classes involving the Hilbert transform of $\ln f$.
\begin{theorem}\label{ham}
Let $f$ be a density on $\mathbb{R}$ with finite moment
sequence. If $f$ has finite logarithmic integral, then $S_{\mathbb{R}}\left( f,\cos( \mathcal{H}\ln f) \right) $ and $S_{%
\mathbb{R}}\left( f,\sin ( \mathcal{H}\ln f) \right) $ are
Stieltjes classes, where
$$\mathcal{H}u(t)=\frac{1}{\pi} P  \INT \left(\frac{1}{t-x}+\frac{x}{1+x^2} \right)u(x)dx, \quad t\in \mathbb{R}.$$
\end{theorem}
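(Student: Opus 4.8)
The plan is to produce both perturbations at once as the normalized real and imaginary parts of the boundary function of a single outer function in the Hardy space $H^1(\U)$, and then to kill all of its moments by a Paley--Wiener argument, in the spirit of Lin's proof of the Krein criterion in \cite{Lin}.

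First I would note that the logarithmic integral hypothesis, combined with the elementary inequality $\log t\le t$ (applied on $\{f>1\}$, using $\int f=1$), gives $\log f\in L^1(\frac{dx}{1+x^2})$. Then the Poisson integral $U(\xi+i\eta)=\frac1\pi\INT\frac{\eta}{(\xi-t)^2+\eta^2}\log f(t)\,dt$ and its conjugate $V(\xi+i\eta)=\frac1\pi\INT\left(\frac{\xi-t}{(\xi-t)^2+\eta^2}+\frac{t}{1+t^2}\right)\log f(t)\,dt$ are well defined and harmonic on $\U$, and I would set $G=\exp(U+iV)$, the outer function with $\log|G|=U$. By Fatou's theorem $U\to\log f$ and $V\to\mathcal H\ln f$ nontangentially a.e.\ (the boundary operator of the modified conjugate Poisson kernel being exactly the $\mathcal H$ of the statement), so $G$ has boundary values $G(x)=f(x)e^{i\mathcal H\ln f(x)}$ a.e.; hence $\operatorname{Re}G(x)=f(x)\cos(\mathcal H\ln f(x))$ and $\operatorname{Im}G(x)=f(x)\sin(\mathcal H\ln f(x))$. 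Applying Jensen's inequality to the probability measure given by the Poisson kernel yields $|G(\xi+i\eta)|=e^{U(\xi+i\eta)}\le (P_\eta f)(\xi)$, and since $\int_\R(P_\eta f)(\xi)\,d\xi=\|f\|_1=1$ for every $\eta>0$, this gives $G\in H^1(\U)$.

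The core of the argument is the claim $\INT x^n G(x)\,dx=0$ for every $n\ge0$, which is exactly equivalent to the two desired moment identities $\INT x^n f\cos(\mathcal H\ln f)\,dx=0$ and $\INT x^n f\sin(\mathcal H\ln f)\,dx=0$. Since $|G(x)|=f(x)$ and $f$ has finite moments of all orders, $x^nG\in L^1(\R)$ for every $n$, so $\widehat G\in C^\infty$ with $\widehat{x^nG}=(i/2\pi)^n\widehat G^{(n)}$; on the other hand $G\in H^1(\U)$ forces $\widehat G$ to vanish on $(-\infty,0)$ by Paley--Wiener, and a $C^\infty$ function vanishing on a half-line has all derivatives $0$ at the origin, so $\INT x^nG(x)\,dx=\widehat{x^nG}(0)=0$. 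I expect this to be the main obstacle: the naive route of closing a contour for $z^nG(z)$ fails because $z^nG(z)\notin H^1(\U)$ (the Poisson integral of $f$ has infinite moments of order $\ge1$), so one really must go through the smoothness of $\widehat G$ rather than argue by residues.

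It then remains to check that $h_1=\cos(\mathcal H\ln f)$ and $h_2=\sin(\mathcal H\ln f)$ give genuine Stieltjes classes: they are measurable with $|h_j|\le1$, and $fh_j\not\equiv0$. For the last point, $fh_2\equiv0$ would make $G$ have real boundary values a.e., and $fh_1\equiv0$ would do the same for $iG$; either way the boundary function is simultaneously the trace of an $H^1$ function of $\U$ and of the lower half-plane, so its Fourier transform is supported in $\{0\}$ and, being continuous, vanishes, forcing $G\equiv0$ — impossible since $|G|=f$ is a density. Combined with the moment identities, this shows that $S_\R(f,h_1)$ and $S_\R(f,h_2)$ are Stieltjes classes with center at $f$, which is the assertion.
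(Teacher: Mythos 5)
Your proposal follows essentially the same route as the paper: form the outer function $G=\exp(U+i\widetilde U)$ with $\log|G|$ the Poisson integral of $\ln f$, use Jensen's inequality to get $G\in H^1(\U)$, identify its boundary function as $f e^{i\mathcal H\ln f}$, and combine the half-line vanishing of its Fourier transform with the smoothness coming from the finite moments to conclude that all moments of $f\cos(\mathcal H\ln f)$ and $f\sin(\mathcal H\ln f)$ vanish. Your final step showing that neither perturbation is identically zero (via the two-sided $H^1$/Fourier-support contradiction) is actually more explicit than the paper's brief remark that at least one of $\Re g,\Im g$ is nonzero, and it correctly covers both Stieltjes classes claimed in the statement.
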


When $I=\R^+$ we have a similar result.
\begin{theorem}\label{sti}
Let $f$ be a density on $\mathbb{R}^+$ with finite moment
sequence. If $f$ satisfies the condition
\begin{equation}\label{parasti}
\int_0^\infty -\frac{\ln f(x^2)}{1+x^2}dx <\infty,
\end{equation}
then $S_{\mathbb{R}^+}( f,\sin (\widetilde {\mathcal{H}}\ln f) ) $ 
is a Stieltjes class, where
$$\mathcal{H}_eu(t)=\frac{2t^{1/2}}{\pi}P \int_0^\infty \frac{u(x^2)}{t-x^2}dx,\quad t>0.$$
\end{theorem}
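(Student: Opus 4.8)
The plan is to deduce Theorem~\ref{sti} from Theorem~\ref{ham} by the classical ``squaring'' substitution $x=y^{2}$. Given $f$ on $\R^{+}$ as in the hypotheses, put
$$p(y)=|y|\,f(y^{2}),\qquad y\in\R.$$
Then $p\ge 0$ is \emph{even}, $\INT p(y)\,dy=\int_{0}^{\infty}f(u)\,du=1$, and $p$ has a finite moment sequence, since $\INT y^{2n}p(y)\,dy=\int_{0}^{\infty}u^{n}f(u)\,du<\infty$ and the odd moments vanish by symmetry (their integrals being absolutely convergent because $u^{n+1/2}\le u^{n}+u^{n+1}$). The only nontrivial point here is that $p$ has finite logarithmic integral: writing $-\ln p(y)=-\ln|y|-\ln f(y^{2})$, the integral $\INT\frac{-\ln|y|}{1+y^{2}}\,dy$ converges absolutely and $\INT\frac{-\ln f(y^{2})}{1+y^{2}}\,dy=2\int_{0}^{\infty}\frac{-\ln f(x^{2})}{1+x^{2}}\,dx<\infty$ by (\ref{parasti}); the positive parts cause no trouble because $p$ is a density ($\ln^{+}p\le p$ yields $\INT\frac{\ln^{+}p}{1+y^{2}}\,dy\le 1$, and $\ln^{+}f(y^{2})\le\ln^{+}p(y)+(\ln|y|)^{-}$). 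Hence $\ln p\in L^{1}(\frac{dy}{1+y^{2}})$ and Theorem~\ref{ham} applies to $p$.

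Next I transport the Stieltjes class back to $\R^{+}$. Since $\ln p$ is even, $\mathcal{H}\ln p$ is odd, so $\cos(\mathcal{H}\ln p)$ is even; this is why I use the cosine class $S_{\R}(p,\cos(\mathcal{H}\ln p))$, whose members $P_{\varepsilon}(y)=p(y)[1+\varepsilon\cos(\mathcal{H}\ln p(y))]$ are themselves even densities on $\R$ with the same moment sequence as $p$. Pushing forward by $x=y^{2}$, i.e. setting $g_{\varepsilon}(x)=P_{\varepsilon}(\sqrt{x})/\sqrt{x}$ for $x>0$, one gets densities $g_{\varepsilon}$ on $\R^{+}$ with $\int_{0}^{\infty}x^{n}g_{\varepsilon}(x)\,dx=\INT y^{2n}P_{\varepsilon}(y)\,dy=\INT y^{2n}p(y)\,dy=\int_{0}^{\infty}u^{n}f(u)\,du$, so $g_{\varepsilon}$ has the moment sequence of $f$ for every $\varepsilon\in[-1,1]$. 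Using $p(\sqrt{x})=\sqrt{x}\,f(x)$ we get the explicit form $g_{\varepsilon}(x)=f(x)[1+\varepsilon\cos(\mathcal{H}\ln p(\sqrt{x}))]$.

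The final, and I expect most delicate, step is to identify $x\mapsto\cos(\mathcal{H}\ln p(\sqrt{x}))$ with $\sin(\mathcal{H}_{e}\ln f(x))$. For an even $v\in L^{1}(\frac{dx}{1+x^{2}})$ the correction term in $\mathcal{H}$ is odd, hence integrates to $0$, and pairing $x$ with $-x$ gives
$$\mathcal{H}v(t)=\frac{2t}{\pi}\,P\!\int_{0}^{\infty}\frac{v(x)}{t^{2}-x^{2}}\,dx,\qquad t>0.$$
Applied to $v=\ln f(\cdot^{2})$ and compared with the definition of $\mathcal{H}_{e}$ this gives $\mathcal{H}[\ln f(\cdot^{2})](t)=\mathcal{H}_{e}(\ln f)(t^{2})$; applied to $v=\ln|\cdot|$, a short computation (scale $x\mapsto tx$, then use $P\!\int_{0}^{\infty}\frac{dx}{1-x^{2}}=0$ and $\int_{0}^{1}\frac{\ln x}{1-x^{2}}\,dx=-\frac{\pi^{2}}{8}$) gives $\mathcal{H}[\ln|\cdot|](t)=-\frac{\pi}{2}$ for $t>0$, with no residual additive constant — precisely the place where the normalization of $\mathcal{H}$ is used. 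By linearity, $\mathcal{H}\ln p(\sqrt{x})=-\frac{\pi}{2}+\mathcal{H}_{e}\ln f(x)$ for $x>0$, hence $\cos(\mathcal{H}\ln p(\sqrt{x}))=\sin(\mathcal{H}_{e}\ln f(x))$, and therefore $\{g_{\varepsilon}:\varepsilon\in[-1,1]\}=S_{\R^{+}}(f,\sin(\mathcal{H}_{e}\ln f))$. This is a genuine Stieltjes class because $f\sin(\mathcal{H}_{e}\ln f)\not\equiv 0$: otherwise $\mathcal{H}_{e}\ln f$ would be an integer multiple of $\pi$, forcing $f(u)=Cu^{-k}$ and contradicting $\int_{0}^{\infty}f=1$.

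The real work is thus concentrated in the last paragraph — performing the change of variables rigorously under the principal value, and checking that the normalization of $\mathcal{H}$ yields $\mathcal{H}[\ln|\cdot|]=-\frac{\pi}{2}\,\mathrm{sgn}$ exactly, so that the cosine perturbation supplied by Theorem~\ref{ham} turns into precisely $\sin(\mathcal{H}_{e}\ln f)$ without a spurious phase shift.
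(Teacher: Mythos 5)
Your argument is essentially the paper's own proof: the paper likewise sets $f^{*}(y)=|y|\,f(y^{2})$, applies Theorem~\ref{ham} to it, and uses the evenness of $f^{*}$ together with $\mathcal{H}(\ln|\cdot|)(t)=-\tfrac{\pi}{2}\,\mathrm{sgn}(t)$ (its Lemma~\ref{hilon}) and Remark~\ref{even} to identify $\cos\bigl(\mathcal{H}\ln f^{*}(\sqrt{x})\bigr)=\sin\bigl(\mathcal{H}_{e}\ln f(x)\bigr)$, the only cosmetic differences being that you transport the whole family $P_{\varepsilon}$ by $x=y^{2}$ whereas the paper changes variables directly in the vanishing-moment integrals, and that you rederive $\mathcal{H}\ln|\cdot|$ by a scaling argument instead of the paper's series computation. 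One small caveat: your closing non-triviality remark (``forcing $f(u)=Cu^{-k}$'') is only heuristic as stated, though the paper's proof of this theorem does not address non-triviality of the perturbation either.
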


This work is organized as follows. In the next section we give some facts about the Hilbert transform and compute the Hilbert transform of two important cases. In the last section we prove the results and analyze two examples to show the usefulness of our approach.

\section{Preliminaries}\label{pre}
The following results can be found in \cite[pages 60-65]{Koosis}. Suppose that the function $u:\R \rightarrow \R$ satisfies
\begin{equation}\label{krein}
\int_{-\infty}^{\infty}\frac{|u(t)|}{1+t^2}dt < \infty.
\end{equation}
Hence the following integral
\begin{eqnarray*}
U(z)+i\widetilde{U}(z)&:=&\frac{i}{\pi} \int_{-\infty}^{\infty} \left(\frac{1}{z-t} +\frac{t}{1+t^2}\right)u(t)dt\\
               &=&\frac{1}{\pi} \INT \frac{\Im z}{|z-t|^2}u(t)dt+i\frac{1}{\pi}\INT \left(\frac{\Re z-t}{|z-t|^2}+ \frac{t}{1+t^2}  \right)u(t)dt
\end{eqnarray*}
converges absolutely on $\mathbb{H}:=\{z\in \mathbb{C}: \Im z>0\}$ and defines an analytic function on $\U$. Notice that $U$ is the Poisson integral of $u$ and is the unique harmonic extension of $u$ to $\U$. Moreover, $\widetilde{U}$ is the unique conjugate harmonic function of $U$ such that $\widetilde{U}(i)=0$.\\

It is known the existence of the non-tangential limits of $U$ and $\widetilde{U}$ at almost $t\in \R$; the non-tangential limit of $U$ is $u$, and the non-tangential limit of $\widetilde{U}$ is called the Hilbert transform of $u$ and is denoted by $\mathcal{H}u$. The Hilbert transform of $u$ can be written as the principal value of a singular integral:
\begin{equation*}
\mathcal{H}u(t)=\frac{1}{\pi}  \lim_{\varepsilon \rightarrow 0}\int_{|x-t|>\varepsilon}\left(\frac{1}{t-x}+\frac{x}{1+x^2} \right)u(x)dx, \quad a.e. \,\, t\in \R. 
\end{equation*}

\begin{remark}\label{even}
a) If $u$ is an even function satisfying (\ref{krein}) then
$$\mathcal{H}u(t)= \frac{2t}{\pi}P \int_0^\infty \frac{u(x)}{t^2-x^2}dx, \quad a.e. \,\, t\in \R.$$
In particular, $\mathcal{H}u$ is an odd function on $\R$. We also can see that $\mathcal{H}c=0$ where $c$ is a constant function.\\
b) Let $u:\R^+\rightarrow \R$ be such that $u\in L^1(dt/(1+t^2))$, then $\mathcal{H}_eu(t)=\mathcal{H}u^*(t^{1/2}),$ $t>0$, where $u^*(x)=u(x^2),$ $x\neq 0$.
\end{remark}

\begin{lemma}\label{hilpower}
Let $0<|\mu |<1$. The function $h_\mu(x)=|x|^\mu$ satisfies $(\ref{krein})$ and
\begin{equation}\label{Mu}
\mathcal{H}h_\mu(t)=-\tan(\mu\pi/2) \text{sgn}( t) |t|^\mu,\quad t\neq 0. 
\end{equation}
In particular, $\mathcal{H}_e(x^{\mu})(t)=-\tan(\mu\pi)t^\mu$, $t>0$.
\end{lemma}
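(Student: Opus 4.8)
The plan is to dispatch the integrability claim, then establish (\ref{Mu}), and finally read off the ``in particular'' clause from Remark \ref{even}b). For (\ref{krein}) I would split $\INT\frac{|x|^{\mu}}{1+x^{2}}\,dx$ at $|x|=1$: near the origin $|x|^{\mu}$ is integrable since $\mu>-1$, while near $\pm\infty$ the integrand is comparable to $|x|^{\mu-2}$, integrable since $\mu<1$. This is the only step that uses the full range $0<|\mu|<1$.

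For (\ref{Mu}) the idea is to write down the analytic completion of the Poisson integral of $h_{\mu}$ explicitly. On $\mathbb{H}$ both $z\mapsto z^{\mu}$ (with $\arg z\in(0,\pi)$) and $z\mapsto(-z)^{\mu}$ (with $\arg(-z)\in(-\pi,0)$, legitimate because $-z$ stays in the simply connected half-plane $\{\Im w<0\}$ avoiding $0$) are analytic, so I would set
$$F(z)=\frac{1}{1+\cos(\mu\pi)}\bigl(z^{\mu}+(-z)^{\mu}\bigr),\qquad z\in\mathbb{H},$$
the denominator being positive since $|\mu|<1$. Passing to non-tangential boundary values one gets, for $x>0$, $F(x)=\frac{x^{\mu}}{1+\cos(\mu\pi)}(1+e^{-i\mu\pi})$ and, for $x<0$, $F(x)=\frac{|x|^{\mu}}{1+\cos(\mu\pi)}(e^{i\mu\pi}+1)$; in either case $\Re F(x)=|x|^{\mu}=h_{\mu}(x)$, so (using $|F(z)|=O(|z|^{\mu})$ with $\mu<1$ and uniqueness of the harmonic extension) $\Re F$ is the Poisson integral $U$ of $h_{\mu}$. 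Since $F(i)=\frac{e^{i\mu\pi/2}+e^{-i\mu\pi/2}}{1+\cos(\mu\pi)}=\frac{2\cos(\mu\pi/2)}{1+\cos(\mu\pi)}\in\R$, the imaginary part $\Im F$ is precisely the conjugate $\widetilde U$ normalized by $\widetilde U(i)=0$, so its boundary value is $\mathcal{H}h_{\mu}$. Taking imaginary parts of the two boundary expressions and simplifying with the half-angle identity $\frac{\sin(\mu\pi)}{1+\cos(\mu\pi)}=\tan(\mu\pi/2)$ yields $\mathcal{H}h_{\mu}(t)=-\tan(\mu\pi/2)\,\text{sgn}(t)\,|t|^{\mu}$ for $t\neq 0$.

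A real-variable alternative avoids complex analysis: since $h_{\mu}$ is even, Remark \ref{even}a) gives $\mathcal{H}h_{\mu}(t)=\frac{2t}{\pi}P\int_{0}^{\infty}\frac{x^{\mu}}{t^{2}-x^{2}}\,dx$ for $t>0$; the substitution $x=ts$ and then $u=s^{2}$ reduces this to $\frac{t^{\mu}}{\pi}P\int_{0}^{\infty}\frac{u^{(\mu-1)/2}}{1-u}\,du=t^{\mu}\cot\!\bigl(\tfrac{\pi(\mu+1)}{2}\bigr)=-t^{\mu}\tan(\mu\pi/2)$, and the oddness of $\mathcal{H}h_{\mu}$ (again Remark \ref{even}a)) extends this to all $t\neq 0$. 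Either way, the one substantive point is isolated: in the complex route it is the verification that $F$ really is the analytic completion of the Poisson integral (a growth/uniqueness argument, needing a little care at $0$ and at $\infty$); in the real route it is the rigorous evaluation of $P\int_{0}^{\infty}\frac{u^{a-1}}{1-u}\,du=\pi\cot(\pi a)$ for $0<a<1$, i.e.\ the treatment of the singularity at $u=1$ via a keyhole contour. I expect this to be the main obstacle; everything else is routine.

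Finally, for the ``in particular'' statement I would invoke Remark \ref{even}b) with $u(x)=x^{\mu}$ on $\R^{+}$: then $u^{*}(x)=u(x^{2})=(x^{2})^{\mu}=|x|^{2\mu}=h_{2\mu}(x)$, which satisfies (\ref{krein}) provided $|\mu|<1/2$, and hence $\mathcal{H}_{e}(x^{\mu})(t)=\mathcal{H}h_{2\mu}(t^{1/2})=-\tan(\mu\pi)\,\text{sgn}(t^{1/2})\,|t^{1/2}|^{2\mu}=-\tan(\mu\pi)\,t^{\mu}$ for $t>0$, by the formula just proved.
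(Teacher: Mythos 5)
Your proposal is correct, and both of your routes are genuinely different from the paper's. The paper proves (\ref{Mu}) for $\mu\in(-1,0)$ by quoting the entry for $\frac{1}{\pi}P\INT\frac{|x|^{\mu}}{t-x}dx$ from King's table of Hilbert transforms, and then obtains the case $\mu\in(0,1)$ from the negative-exponent case via the substitution $x\mapsto 1/x$ in the one-sided principal-value integral of Remark \ref{even}a); it never evaluates the singular integral from scratch. Your real-variable route is the self-contained version of that computation: after $x=ts$, $u=s^{2}$, everything reduces to the classical formula $P\int_{0}^{\infty}\frac{u^{a-1}}{1-u}du=\pi\cot(\pi a)$ for $0<a<1$, proved by an indented keyhole contour; this replaces the citation of King and treats both signs of $\mu$ at once, so the paper's inversion trick becomes unnecessary. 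Your complex route is different in spirit but closer to how the paper defines $\mathcal{H}$: exhibiting $F(z)=\bigl(z^{\mu}+(-z)^{\mu}\bigr)/(1+\cos\mu\pi)$ as the analytic completion of the Poisson integral, normalized by $\Im F(i)=0$, yields (\ref{Mu}) with no singular integrals at all. Its one delicate step, which you correctly flag, is the identification $\Re F=U$; the clean way to close it is to note that for $z=re^{i\theta}$, $0<\theta<\pi$, one has $\Re F(z)=2r^{\mu}\cos\bigl(\mu(\theta-\pi/2)\bigr)\cos(\mu\pi/2)/(1+\cos\mu\pi)>0$, then apply the Herglotz representation of positive harmonic functions on $\U$ and use $\Re F(iy)=O(y^{\mu})$ both as $y\to\infty$ (killing the linear term $cy$, since $\mu<1$) and as $y\to 0$ (excluding a point mass at the origin, since $\mu>-1$), with the locally uniform convergence of $\Re F(\cdot+iy)$ to $h_{\mu}$ on compact sets away from $0$ ruling out any further singular part. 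Finally, your observation that the $\mathcal{H}_{e}$ clause requires $|\mu|<1/2$ (so that $h_{2\mu}$ satisfies (\ref{krein})) is accurate and is exactly the range occurring in the paper's examples; given that, the ``in particular'' statement follows from Remark \ref{even}b) just as you say.
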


\begin{proof} Let $\mu \in (-1,0)$. From \cite[Table 1.2, page 464]{King} and Remark 3 we have
\begin{eqnarray*}
-\tan(\mu\pi/2) \text{sgn}(t) |t|^{\mu}&=&\frac{1}{\pi}P\INT\frac{|x|^{\mu}}{t-x}dx\\
&=&\frac{2t}{\pi}P\int_0^\infty\frac{x^{\mu}}{t^2-x^2}dx=\mathcal{H}h_{\mu}(t), \quad t\neq 0.
\end{eqnarray*}

Let $\mu \in (0,1)$. From Remark \ref{even} and the previous case we obtain
\begin{eqnarray*}
\mathcal{H}h_\mu(t)&=&\frac{2t}{\pi}P \int_0^\infty \frac{x^{\mu}}{t^2-x^2}dx\\
&=&-\frac{2}{\pi t}P \int_0^\infty \frac{x^{-\mu}}{t^{-2}-x^2}dx\\
&=&-\tan(\mu\pi/2)\text{sgn}(t) |t|^\mu, \quad t \neq 0.
\end{eqnarray*}
\end{proof}

\begin{lemma}\label{hilon}
$\mathcal{H}(\ln|x|)(t)= -\pi/2$ \text{sgn}$(t)$ for  $t\neq 0$. In particular, $\mathcal{H}_e(\ln x)\equiv -\pi.$
\end{lemma}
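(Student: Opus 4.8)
The plan is to read off $\mathcal{H}(\ln|x|)$ by differentiating the one-parameter family of Lemma \ref{hilpower} at $\mu=0$. First, $\ln|x|$ satisfies (\ref{krein}) because $\INT\frac{|\ln|x||}{1+x^2}\,dx<\infty$, so $\mathcal{H}(\ln|x|)$ is defined a.e. Since $h_\mu(x)=|x|^\mu$ has $\partial_\mu h_\mu(x)=|x|^\mu\ln|x|$, which at $\mu=0$ is exactly $\ln|x|$, linearity of the Hilbert transform gives, formally,
$$\mathcal{H}(\ln|x|)(t)=\partial_\mu\big|_{\mu=0}\mathcal{H}h_\mu(t)=\partial_\mu\big|_{\mu=0}\Bigl[-\tan(\mu\pi/2)\,\text{sgn}(t)\,|t|^\mu\Bigr]=-\tfrac{\pi}{2}\,\text{sgn}(t),$$
the last step using $\tan 0=0$ and $\tfrac{d}{d\mu}\tan(\mu\pi/2)\big|_{\mu=0}=\pi/2$.

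The step I expect to be the main obstacle is justifying $\partial_\mu\mathcal{H}h_\mu=\mathcal{H}\partial_\mu h_\mu$. I would argue from the principal-value form $\mathcal{H}h_\mu(t)=\tfrac1\pi\lim_{\varepsilon\to0}\int_{|x-t|>\varepsilon}\bigl(\tfrac1{t-x}+\tfrac{x}{1+x^2}\bigr)|x|^\mu\,dx$: for $\mu$ in a fixed neighbourhood of $0$ the difference quotients $(|x|^{\mu}-1)/\mu$ are bounded near $x=t$ by a fixed multiple of $\bigl|\ln|x|\bigr|$, so the $x=t$ singularity remains harmless after taking principal values, while at infinity they are dominated by an $L^1\bigl(dx/(1+x^2)\bigr)$ function; these bounds let the $\mu$-derivative pass through both the integral and the limit.

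An alternative that avoids differentiation uses the harmonic-conjugate description of Section \ref{pre}. One checks that the Poisson integral of $\ln|x|$ is $U(z)=\ln|z|$: along the imaginary axis $U(iy)=\tfrac1\pi\INT\tfrac{y}{t^2+y^2}\ln|t|\,dt=\ln y$ after the scaling $t=ys$ and the identity $\INT\tfrac{\ln|s|}{1+s^2}\,ds=0$, and $\ln|z|-U(z)$ is harmonic on $\mathbb{H}$, vanishes on $\mathbb{R}\setminus\{0\}$ and has slow growth, hence is a multiple of $\Im z$; comparison along the imaginary axis forces that multiple to be $0$. The analytic completion of $\ln|z|$ on $\mathbb{H}$ is $\log z=\ln|z|+i\arg z$ with $\arg z\in(0,\pi)$, so the conjugate normalised by $\widetilde U(i)=0$ is $\widetilde U(z)=\arg z-\pi/2$; its non-tangential boundary values are $\arg t-\pi/2$, namely $-\pi/2$ for $t>0$ and $\pi/2$ for $t<0$, i.e. $-\tfrac{\pi}{2}\,\text{sgn}(t)$.

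For the ``in particular'', apply Remark \ref{even}(b) to $u(x)=\ln x$ on $\mathbb{R}^+$, noting $u\in L^1(dt/(1+t^2))$. Then $u^*(x)=u(x^2)=2\ln|x|$, so for $t>0$, using $t^{1/2}>0$,
$$\mathcal{H}_eu(t)=\mathcal{H}u^*(t^{1/2})=2\,\mathcal{H}(\ln|x|)(t^{1/2})=2\cdot\bigl(-\tfrac{\pi}{2}\bigr)=-\pi,$$
hence $\mathcal{H}_e(\ln x)\equiv-\pi$.
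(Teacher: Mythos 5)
Your argument is correct but proceeds quite differently from the paper, which evaluates the principal-value integral $\frac{2t}{\pi}P\int_0^\infty\frac{\ln x}{t^2-x^2}\,dx$ head-on by expanding the kernel in a power series and summing $\sum_{n\ge0}(2n+1)^{-2}=\pi^2/8$. Your first route trades that computation for Lemma \ref{hilpower}: differentiating $\mathcal{H}h_\mu(t)=-\tan(\mu\pi/2)\,\mathrm{sgn}(t)|t|^\mu$ at $\mu=0$ (using $\mathcal{H}1=0$ from Remark \ref{even}) gives the answer instantly, and the only real work is the interchange $\partial_\mu\mathcal{H}=\mathcal{H}\partial_\mu$. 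There your stated estimate is not quite the right one: near the singularity $x=t\neq0$, boundedness of the difference quotients by a multiple of $\bigl|\ln|x|\bigr|$ does not by itself control the principal value or its limit in $\mu$; what you need (and what does hold) is a Lipschitz bound uniform in $|\mu|\le\mu_0<1$, e.g. $\bigl|\partial_x\frac{|x|^\mu-1}{\mu}\bigr|=|x|^{\mu-1}$ bounded near $x=t$, so that $P\int_{|x-t|<\delta}$ can be rewritten with the numerator $u(x)-u(t)$ and dominated convergence applies; away from $t$ your $L^1(dx/(1+x^2))$ domination is fine. Your second route is the cleanest conceptually and closest in spirit to how $\mathcal{H}$ is actually defined in Section \ref{pre}: identify the Poisson integral of $\ln|t|$ with $\ln|z|$ and the normalized conjugate with $\arg z-\pi/2$, whose nontangential limits are $-\frac{\pi}{2}\,\mathrm{sgn}(t)$ at every $t\neq0$ (not just a.e.). The one step to make precise there is $U(z)=\ln|z|$: either quote the classical formula that the Poisson integral of $\ln|t-\xi|$, $\xi\in\mathbb{R}$, is $\ln|z-\xi|$, or flesh out your uniqueness argument (the "slow growth implies a multiple of $\Im z$" step is a Phragm\'en--Lindel\"of/reflection statement that deserves a reference); your computation $U(iy)=\ln y$ then correctly kills the extra $c\,\Im z$. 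The deduction of $\mathcal{H}_e(\ln x)\equiv-\pi$ via Remark \ref{even}(b) and $u^*(x)=2\ln|x|$ matches the paper. In short: the paper's proof is elementary and self-contained; your routes are shorter and reuse structure already in the paper (the power formula, respectively the explicit analytic completion $\log z$), at the cost of one interchange-of-limits or one classical Poisson-integral identity that should be justified more carefully than you have.
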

\begin{proof}
From Remark \ref{even} we have
$$\mathcal{H}(\ln|x|)(t)=\frac{2t}{\pi}P \int_0^\infty \frac{\ln x}{t^2-x^2}dx,$$
and it is sufficient to consider $t>0$. Now, from the identity
$$ \int x^a \ln xdx=\frac{x^{a+1} \ln x}{a+1}-\frac{x^{a+1}}{(a+1)^2},$$
we get for $\varepsilon>0$ small enough that
\begin{eqnarray*}
\frac{1}{t^2}\int_0^{t-\varepsilon}\frac{\ln x}{1-(x/t)^2}dx&=&\sum_{n=0}^{\infty}\frac{1}{t^{2n+2}}\int_0^{t-\varepsilon}x^{2n}\ln xdx\\
&=&\left. \sum_{n=0}^{\infty}\frac{1}{t^{2n+2}}\left[\frac{x^{2n+1} \ln x}{2n+1}-\frac{x^{2n+1}}{(2n+1)^2}\right|_{x=0}^{x=t-\varepsilon}\right]\\
&=& \sum_{n=0}^{\infty}\frac{(t-\varepsilon)^{2n+1} \ln(t-\varepsilon)}{(2n+1)t^{2n+2}}-\frac{(t-\varepsilon)^{2n+1}}{(2n+1)^2t^{2n+2}}\\
&=& t^{-1}\ln(t-\varepsilon)\arctanh((t-\varepsilon)/t)-\sum_{n=0}^{\infty}\frac{(t-\varepsilon)^{2n+1}}{(2n+1)^2t^{2n+2}}\\
\end{eqnarray*}
Similarly, we get
\begin{eqnarray*}
-\int^\infty_{|t|+\varepsilon}\frac{1}{x^2}\frac{\ln x}{1-(t/x)^2}dx&=& \sum_{n=0}^{\infty}\frac{-t^{2n} \ln(t+\varepsilon)}{(2n+1)(t+\varepsilon)^{2n+1}}-\frac{t^{2n}}{(2n+1)^2(t+\varepsilon)^{2n+1}},\\
&=&-t^{-1} \ln(t+\varepsilon)\arctanh(t/(t+\varepsilon))-\sum_{n=0}^{\infty}\frac{t^{2n}/(t+\varepsilon)^{2n+1}}{(2n+1)^2},
\end{eqnarray*}
then we use that $\arctanh(x)=2^{-1}\ln\frac{1+x}{1-x} ,$ $|x|<1$, and apply the Weierstrass M-test considering $\varepsilon\in[0,\varepsilon_0)$ with $\varepsilon_0$ small enough, to obtain
\begin{eqnarray*}
\mathcal{H}(\ln|x|)(t)&=&\frac{1}{\pi}\lim_{\varepsilon\rightarrow 0^+} \ln\frac{(t-\varepsilon)(2t-\varepsilon)}{(t+\varepsilon)(2t+\varepsilon)} \\
&&-\frac{2}{\pi}\lim_{\varepsilon\rightarrow 0^+}\sum_{n=0}^{\infty}\frac{1}{(2n+1)^2}\left( \left((t-\varepsilon)/t\right)^{2n+1}+(t/(t+\varepsilon))^{2n+1}\right)\\
&=&-\frac{4}{\pi}\sum_{n=0}^{\infty}\frac{1}{(2n+1)^2}=-\frac{\pi}{2},\quad t>0.
\end{eqnarray*}
\end{proof}
\section{Proof of the results}
\begin{proof}[\textbf{Proof of Theorem \ref{ham}}.]
Since $\ln f\leq f$, condition (\ref{logint})\ is equivalent to $\ln f\in L^{1}(
dt/( 1+t^{2}))$, so we can set $u=\ln f$ and proceed as at the beginning of Section \ref{pre}: consider the holomorphic function $F(z)=U(z)+i\widetilde{U}(z)$ on $\U$, where $U$ is the harmonic
extension of $u$\ to $\U$, and $\widetilde{U}$ is the unique conjugate harmonic function of $U$ satisfying $\widetilde{U}(i)=0.$\\

Now we introduce the function $G\in hol(\U)$ given by
\begin{equation*}
G\left( z\right) =\exp \circ F (z),\quad z\in \U. 
\end{equation*}

By using the Jensen's inequality we get that 
\begin{equation*}
\left| G\left( z\right) \right| =\exp U\left( z\right) \leq \frac{1}{\pi } \INT \frac{\Im z}{|z-t|^2} f\left( t\right) dt, \quad z\in\U,
\end{equation*}
therefore 
\begin{equation*}
\INT\left| G\left( x,y\right) \right| dx\leq \INT f\left( t\right) dt=1\text{ for all }y>0.
\end{equation*}
Thus $G \in H^{1}$ and Theorem 3.1 in \cite[page 55]{Garnett}  implies that there exists a function $g\in L^{1}( \R) $ such that 
\begin{equation*}
g( x) =\lim_{y\rightarrow 0^+}G\left( x,y\right) ,\text{\ }a.e.\text{ }x\in \R .
\end{equation*}
By the other hand, we have
\begin{eqnarray*}
\lim_{y\rightarrow 0^+} G\left( x,y\right) &=&\exp\left(\lim_{y\rightarrow 0^+}U(x,y)\right) \exp\left(i\lim_{y\rightarrow 0^+}\widetilde{U}(x,y)\right)\\
                                                              & =& f\left( x\right) \exp\left(i(\mathcal{H}\ln f )(x)\right),\quad a.e.\text{ }x\in \mathbb{R},
\end{eqnarray*}
therefore 
\begin{equation} \label{fuente}
g(x)=f\left( x\right) \exp\left(i(\mathcal{H}\ln f )(x)\right) \quad \text{a.e on } \R.
\end{equation}
In particular, we notice that $g$ has finite moments of all nonnegative orders.\\

By Lemma 3.7 in \cite[page 59]{Garnett} we have
$$\INT g(x)e^{itx}dx =0\quad \text{ for all } t\geq 0, $$
which implies that 
$$\INT (ix)^k g(x)e^{itx}dx =0\quad \text{ for all } t\geq 0. $$
We set $t=0$ to get
\begin{equation}\label{vanish}
\INT x^k \Re g(x) dx= \INT x^k \Im g(x) dx= 0\quad \text{ for all } k\geq 0.
\end{equation}
Since $f$ is a density and $|g|=f$ a.e. on $\R$, it follows that at least one of the functions $\Re g, \Im g$ is a nonzero function. From (\ref{fuente}) and (\ref{vanish}) we get that $\cos(\mathcal{H}\ln f)$ and $\sin(\mathcal{H}\ln f)$ are perturbations for Stieltjes classes with center at $f$.
\end{proof}

\begin{example} Odd powers of the normal distribution. Let $X$ be a random variable with $X \sim N(0,\frac{1}{2})$, then $X^{2n+1}$, $n\geq 1$, has the density
$$f_{n}(x):=\frac{1}{(2n+1)\sqrt{\pi}}|x|^{-2n/(2n+1)}\exp(-|x|^{2/(2n+1)}), \quad x\in \R.$$ 
Clearly $f_n$ has a finite moment sequence. In \cite{Stoyanov4} was shown that $f_{n}$ has finite logarithmic integral for all $n \geq 1$. Lemmas \ref{hilpower} and \ref{hilon} imply that
$$\mathcal{H}\ln f_n(t)=\text{sgn}(t)\left( \pi n/(2n+1)+\tan\left( \pi /(2n+1)\right)|t|^{2/(2n+1)} \right), \, t\neq 0,$$
therefore $S_{\mathbb{R}}\left( f_n,h_c^n \right) $ and $S_{\mathbb{R}}\left( f_n,h_s^n) \right) $ are Stieltjes classes for all $n\geq 1$, where
\begin{eqnarray*}
h_c^n(t)&=&\cos(\mathcal{H}\ln f_n(t))=\cos\left( \pi n/(2n+1)+\tan\left( \pi /(2n+1)\right)|t|^{2/(2n+1)} \right)\\
              &=& \sin(\pi/(4n+2))\cos(\beta_n|t|^{2/(2n+1)})-  \cos(\pi/(4n+2))\sin(\beta_n|t|^{2/(2n+1)}), 
\end{eqnarray*}
and
\begin{eqnarray*}
h_s^n(t)&=&\sin(\mathcal{H}\ln f_n(t))=\text{sgn}(t)\sin\left( \pi n/(2n+1)+\tan\left( \pi /(2n+1)\right)|t|^{2/(2n+1)} \right)\\
             &=&\text{sgn}(t) \left(\sin(\pi/(4n+2))\sin(\beta_n|t|^{2/(2n+1)})+  \cos(\pi/(4n+2))\cos(\beta_n|t|^{2/(2n+1)})\right),
\end{eqnarray*}
with $\beta_n=\tan\left( \pi /(2n+1)\right)$, $t\neq 0$. The perturbation $h_c^n$ was obtained for the first time in \cite{Berg}. As far as we know $h_s$ is a new perturbation, we can proceed as in \cite{Berg} to verify that $f_n h_s$ has vanishing moments.
\end{example}

\begin{proof}[\textbf{Proof of Theorem \ref{sti}}]
We set $f^*(x)=|x|f(x^2)$, $x \neq 0$. Clearly $f^*$ is a density on $\R$ and verifies
$$\INT-\frac{\ln f^*\left( x\right) }{1+x^{2}}dx=-2\int_0^{\infty}\frac{\ln x }{1+x^{2}}dx-2\int_0^{\infty}\frac{\ln f\left( x^2\right) }{1+x^{2}}dx<\infty .$$
The hypothesis about $f$ imply that $f^*$ has a finite moment sequence. Actually, since $f^*$ is an even function, the moments of odd order of $f^*$ vanish.\\

Hence $f^*$ satisfies the hypothesis in Theorem \ref{ham} and we can proceed as in (\ref{vanish}) to get
\begin{equation*}
\INT x^{2k} f^*(x) \cos\left( \mathcal{H}\ln f^* (x)\right) dx= \INT x^{2k+1} f^*(x) \sin\left( \mathcal{H}\ln f^* (x)\right) dx= 0
\end{equation*}
for all $k\geq 0$. Remark \ref{even} implies that
\begin{eqnarray*}
\INT x^{2k} f^*(x) \cos\left( \mathcal{H}\ln f^* (x)\right) dx&=&2\int_0^\infty x^{2k+1}f(x^2)\cos\left( \mathcal{H}\ln f^* (x)\right) dx \\
&=& \int_0^\infty x^k f(x) \cos\left( \mathcal{H}\ln f^* (x^{1/2})\right) dx=0
\end{eqnarray*}
for all $k\geq 0$. Similarly,
$$\INT x^{2k+1} f^*(x) \sin\left( \mathcal{H}\ln f^* (x)\right) dx=\int_0^\infty x^{k+1/2} f(x) \sin\left( \mathcal{H}\ln f^* (x^{1/2})\right) dx=0$$
for all $k\geq 0$. Unfortunaly, we notice the function $x^{1/2}\sin\left( \mathcal{H}\ln f^* (x^{1/2})\right)$ is not bounded on $\R^+$.\\

Remark \ref{even} and Lemma \ref{hilon} imply that
\begin{eqnarray*}
\mathcal{H}\ln f^* (t)&=&\mathcal{H}(\ln|x|)(t)+\mathcal{H}\left(\ln f(x^2)\right)(t)\\
&=& -\frac{\pi}{2} \text{sgn}(t)+\frac{2t}{\pi}P \int_0^\infty \frac{\ln f(x^2)}{t^2-x^2}dx.
\end{eqnarray*}
Finally, for $t>0$ we have
$$\mathcal{H}\ln f^* (t^{1/2})=-\frac{\pi}{2}+\frac{2t^{1/2}}{\pi}P \int_0^\infty \frac{\ln f(x^2)}{t-x^2}dx,$$
therefore $\cos\left( \mathcal{H}\ln f^*(t^{1/2})\right)=\sin(\mathcal{H}_e\ln f(t))$ is a perturbation for the Stieltjes class with center at $f$.
\end{proof} 

\begin{example}
Let $X\sim N(0,\frac{1}{2})$. For $r>0$ the random variable $|X|^r$ has a density supported on $\R^+$ given by
$$f_r(x):=\frac{2}{r\sqrt{\pi}}x^{1/r-1}\exp(-x^{2/r}), \quad x>0.$$
Clearly $f_r$ has a finite moment sequence. In \cite{Stoyanov4} was shown that $f_r$ satisfies the condition (\ref{parasti}) iff $r>4$.
In this case, Lemmas \ref{hilpower} and \ref{hilon} imply that
\begin{eqnarray*}
\mathcal{H}_e\left(\ln f_r\right)(t)&=&\left(1/r-1\right)\mathcal{H}_e (\ln x)(t)-\mathcal{H}_e\left(x^{2/r}\right)(t)\\
&=& (1-1/r)\pi+\tan(2\pi/r)t^{2/r}, \quad t> 0.
\end{eqnarray*}
Therefore $S_{\mathbb{R}^+}\left( f_r,h_r \right) $ is a Stieltjes class for all $r>4$ where
\begin{eqnarray*}
h_r(t)&=&\sin(\mathcal{H}_e\left(\ln f_r\right)(t))=\sin((1-1/r)\pi+\tan(2\pi/r)t^{2/r})\\
&=&  \sin(\pi/r)\cos(\tan(2\pi/r)t^{2/r})-\cos(\pi/r)\sin(\tan(2\pi/r) t^{2/r}),  \quad t>0.
\end{eqnarray*}
This perturbation was also obtained in \cite{Berg}.
\end{example}

\textbf{Conclusion} Krein condition is no longer just a qualitative result to show the $M$-indeterminacy of a density $f$ but provides families of densities having all the same moment sequence as $f$.

\bibliography{biblio_Stieltjes}{}
\bibliographystyle{plain}
\end{document}